\pgfplotsset{compat=1.8}
\newcommand{\sara}[1]{\textcolor{red}{#1}}
\newcommand{\correz}[1]{\textcolor{red}{#1}}
\newtheorem{theorem}{Theorem}
\newtheorem{conjecture}[theorem]{Conjecture}
\newtheorem{corollary}[theorem]{Corollary}
\newtheorem{lemma}[theorem]{Lemma}
\newcommand{\df}[2]{\frac{\textnormal{d}  #1}{\textnormal{d} #2}}
\newcommand{\fbr}{f_{\textnormal{br}}} 
\newcommand{\fcr}{f_{\textnormal{cr}}} 
\newcommand{\fbrhet}{f_{\textnormal{br,ij}}} 
\newcommand{\fcrhet}{f_{\textnormal{cr,ij}}} 
\newcommand{\fbrsame}{f_{\textnormal{br,ii}}} 
\newcommand{\fcrsame}{f_{\textnormal{cr,ii}}} 
\newcommand{\yinfty}{\bar{y}_\infty} 
\newcommand{\BA}{Barab{\'a}si-Albert}
\begin{document}
	
\title{A minimal model for multigroup adaptive SIS epidemics}
	
\author{Massimo A. Achterberg$^{1,}$\thanks{M.A.Achterberg@tudelft.nl}\;,\; Mattia Sensi$^{2,3,}$\thanks{mattia.sensi@polito.it}\; and Sara Sottile$^{4,}$\thanks{sara.sottile4@unibo.it}}
\date{\small $^1$Faculty of Electrical Engineering, Mathematics and Computer Science \\ Delft University of Technology, P.O. Box 5031, 2600 GA Delft, The Netherlands \\
$^2$MathNeuro Team, Inria at Universit\'e C\^ote d'Azur, 2004 Rte des Lucioles, 06410 Biot, France\\
$^3$Department of Mathematical Sciences ``G. L. Lagrange'', Politecnico di Torino,\\ Corso Duca degli Abruzzi 24, 10129 Torino Italy\\
$^4$Dept. of Medical and Surgical Sciences, University of Bologna, Via Massarenti 9, Bologna Italy}

\maketitle

\begin{abstract}
We propose a generalization of the adaptive N-Intertwined Mean-Field Approximation (aNIMFA) model studied in \emph{Achterberg and Sensi} \cite{achterbergsensi2022adaptive} to a heterogeneous network of communities. In particular, the multigroup aNIMFA model describes the impact of both local and global disease awareness on the spread of a disease in a network. We obtain results on existence and stability of the equilibria of the system, in terms of the basic reproduction number~$R_0$. \correz{Assuming individuals have no reason to decrease their contacts in the absence of disease}, we show that the basic reproduction number~$R_0$ is equivalent to the basic reproduction number of the NIMFA model on static networks.
Based on numerical simulations, we demonstrate that with just two communities periodic behaviour can occur, which contrasts the case with only a single community, in which periodicity was ruled out analytically.
We also find that breaking connections between communities is more fruitful compared to breaking connections within communities to reduce the disease outbreak on dense networks, but both strategies are viable to networks with fewer links. Finally, we emphasize that our method of modelling adaptivity is not limited to SIS models, but has huge potential to be applied in other compartmental models in epidemiology.
\end{abstract}

\section{Introduction}\label{sec_introduction}
\correz{During epidemic outbreaks without access to medicines, vaccines and other pharmaceutical prevention measures, the only possibility to reduce the spread of the outbreak is by contact avoidance. If the choice for contact avoidance is based on the prevalence of the disease in the population, we call it \emph{adaptivity}.} Various approaches have been suggested to model the response of individuals to an epidemic. A common approach is to apply an SIR-like (Susceptible -- Infected -- Recovered) compartmental model by including an Aware compartment \cite{juher2020robustness,juher2022saddle,just2018oscillations,sahneh2012optimal,sahneh_2019_awareness_SIS}. Aware individuals are conscious of the ongoing disease and are therefore more cautious towards contact with other individuals. Thus, aware individuals are less likely to be infected. Moreover, some authors included the possibility of corruption of information spreading throughout a population \cite{agliari2006efficiency,funk2009spread,ASISpairapprox}, which has been very problematic throughout the \mbox{COVID-19} pandemic in particular \cite{kouzy2020covid}.

Other authors suggested local rules to model personal risk mitigation using link rewiring schemes \cite{GrossEtAl2006} or link-breaking rules \cite{RandomLinkDeActivate,G-ASIS}. These models primarily consider personal decisions of nodes to apply mitigation strategies, who generally base their decision on the presence of the disease in the local neighbourhood around that node. On the contrary, the overall presence of the disease in the population is often neglected. A recent paper, conversely, proposed an interesting game-theoretical approach to the coupling of infectious disease spreading and global awareness \cite{cascante2022adaptive}.

In reality, both personal risk perception and global knowledge of the disease persistence in the population play a role in the mitigation strategy. Here, we propose a multigroup adaptive SIS (Susceptible -- Infected -- Susceptible) model, based on the simple model for adaptive SIS epidemics \cite{achterbergsensi2022adaptive}. The multigroup approach is convenient to describe heterogeneity amongst agents using a system of ODEs, by considering the evolution of a disease in a network of communities \cite{LajmanovichYorke1976,huang1992stability,sun2011global,muroya2013global,muroya2014further,mohapatra2015compartmental,fan2018global,adimy2023multigroup,ottaviano2023global}. The disease evolves both within each community, in which homogeneous mixing is assumed, and between communities, depending on the strength of the links between those communities. The size of a community may vary between households to entire countries. Besides the usual split-up of the population in geographically separated groups, the communities mentioned above could also represent age-groups in the population and they are widely used for applications with real data to simulate the incidence of a disease stratified by age groups
\cite{boccalini2021health,calabro2022health,fochesato2022economic}. 

Even though the disease dynamics of the aNIMFA model is extremely simplified (only an SIS model is utilised), we expect the dynamics to be very rich and should be able to accurately capture many aspects of group-level risk mitigation during epidemic outbreaks. In particular, we show that periodic solutions may occur in an asymmetric network with just two communities, which contrasts the case with a single community, in which it was proven that limit cycles cannot occur \cite{achterbergsensi2022adaptive}.

The paper is structured as follows. In Section \ref{sec:model}, we introduce our multigroup generalization of the aNIMFA model. In Section \ref{sec:results}, we compute the basic reproduction number and prove the existence and stability of the disease-free equilibrium. Then, we prove the existence of at least one endemic equilibrium when the basic reproduction number is (slightly) larger than one. Section \ref{sec:simul} showcases various numerical case studies of the aNIMFA model, including the emergence of periodic behaviour in just two communities. We conclude with Section \ref{sec:concl}.

\section{The multigroup aNIMFA model}\label{sec:model}
Achterberg and Sensi \cite{achterbergsensi2022adaptive} introduced a simple model for adaptive SIS epidemics in a well-mixed population. The model is given by
\begin{subequations}\label{eq_animfa_1node}
	\begin{align}
		\df{y}{t} &= - \delta y + \beta y (1-y) z,\label{eq_diff_y1} \\
		\df{z}{t} &= - \zeta z \fbr(y) + \xi (1-z) \fcr(y),\label{eq_diff_z1} \\
		& \qquad \qquad \text{feasible region } 0 \leq y \leq 1, 0 \leq z \leq 1 \nonumber
	\end{align}
\end{subequations}
where $y,z$ represent the fraction of infected nodes (known as the \emph{prevalence}) and the fraction of existing connections in the population, respectively. The fraction of infected nodes in (\ref{eq_diff_y1}) decreases based on recovery (first term with rate $\delta$) and increases based on infections (second term with rate~$\beta$). The link density in (\ref{eq_diff_z1}) decreases (first term) based on the link-breaking process $\fbr$ with rate $\zeta$ and increases (second term) with the link-creation process $\fcr(y)$ with rate $\xi$. The functional responses $\fbr$ and $\fcr$ capture human behaviour and describe the people's response to the ongoing epidemic. In particular, the functional responses $\fbr$ and $\fcr$ depend directly on the prevalence~$y$. It is assumed that $\fbr$ and $\fcr$ are non-negative functions, i.e. $\fbr(y),\fcr(y) \geq 0$ for all $0 \leq y \leq 1$.

Here, we propose a generalisation of (\ref{eq_animfa_1node}) to $n$ heterogeneous groups:
\begin{subequations}\label{eq_animfa}
	\begin{align}
		\df{y_i}{t} &= -\delta_i y_i + (1-y_i) \sum_{j=1}^n \beta_{ij} y_j  z_{ij}, \\
		\df{z_{ij}}{t} &= - \zeta_{ij} z_{ij} \fbrhet(y_i, y_j, \bar{y}) + \xi_{ij} (1-z_{ij}) \fcrhet(y_i, y_j, \bar{y}), \\
		\bar{y} &= \frac{1}{n} \sum_{j=1}^n y_j,\\
		& \qquad \qquad \text{feasible region } 0 \leq y_i \leq 1, 0 \leq z_{ij} \leq 1 \text{ for } 1 \leq i,j \leq n \nonumber
	\end{align}
\end{subequations}
where $y_i$ is the local prevalence in group $i$, $z_{ij}$ is the link density between group \correz{$j$ and $i$ (which is not necessarily equal to $z_{ji}$)}, $\delta_i$ is the curing rate of group $i$, $\beta_{ij}$ is the infection rate from group $j$ to group $i$, $\zeta_{ij}$ is the link-breaking rate between group \correz{$j$ and $i$}, $\xi_{ij}$ is the link-creation rate between group \correz{$j$ and $i$} and $\bar{y}$ is the global prevalence (or simply the prevalence) of the infectious disease in the whole population, i.e.\ the average prevalence over all communities. We are implicitly assuming here that all communities have the same size. In general, the functional responses $\fbrhet$ and $\fcrhet$ can be different between each \correz{couple of communities}. In particular, when $i=j$, the functional responses $\fbrsame$ and $\fcrsame$ describe the different responses of each community $i$ to the internal spread of the infectious disease. \correz{We represent the exemplifying flow within and between two connected communities in Figure \ref{fig:flow}.}

	\begin{figure}[h!]
		\centering
		\begin{tikzpicture}
			\node[draw,circle,thick,minimum size=1.4cm] (Si) at (-2,2) {$1-y_i$};
			\node[draw,circle,thick,minimum size=1.4cm] (Ii) at (2,2) {$y_i$};
			\draw[-Stealth,thick] (Si)--(Ii) node[below, midway]{$\beta_{ii}z_{ii}y_i$};
			\draw[-Stealth,thick] (Ii) to [bend right] node[above, midway]{$\delta_i$} (Si) ;
			\node[draw,circle,thick,minimum size=1.4cm] (Ij) at (-2,-1) {$y_j$};
			\node[draw,circle,thick,minimum size=1.4cm] (Sj) at (2,-1) {$1-y_j$};
			\draw[-Stealth,thick] (Sj)--(Ij) node[above, midway]{$\beta_{jj}z_{jj}y_j$};
			\draw[-Stealth,thick] (Ij) to [bend right] node[below, midway]{$\delta_j$} (Sj) ;
			\draw[-Stealth,dashed] (Ij)--(Si) node[left, midway]{$\beta_{ij}z_{ij}y_j$};
			\draw[-Stealth,dashed] (Ii)--(Sj) node[right, midway]{$\beta_{ji}z_{ji}y_i$};
			\node at (-2.8,2.8) {\Large$S_i$};
			\node at (2.8,2.8) {\Large$I_i$};
			\node at (-2.8,-1.8) {\Large$I_j$};
			\node at (2.8,-1.8) {\Large$S_j$};
		\end{tikzpicture}
		\caption{\correz{Interaction within and between two connected communities $i$ and $j$. Solid lines: change of state of individuals within one community; dashed lines: inter-community infections. Notice that the position of the Susceptible and Infected compartments are switched from the top and the bottom row.} 
			\label{fig:flow}}
	\end{figure}
\correz{We remark that we only need to keep track of the fraction of Infected individuals in each community, since Susceptible individuals are exactly $1-y_i$ at all times, as we are considering an SIS model with no demography spreading in a network of communities with normalized populations.}

The $n$ communities can be represented as $n$ nodes in a graph $G$, where each node represents one community. The links in the graph $G$ constitute the connections \correz{$\beta_{ji}$ from community~$i$ to community~$j$}. We assume the graph $G$ is \correz{strongly connected} (i.e.\ the adjacency matrix $B=(\beta_{ij})$ is irreducible). \correz{Although the infection rate $\beta_{ij}$ may be positive from community $j$ to $i$, the adaptive weight~$z_{ij}$ may become (temporarily) zero due to the link-adaptivity process. On the other hand, a non-existent link $\beta_{ij}=0$ in the adjacency matrix~$B$ can never yield a time-varying (positive) link weight $z_{ij} > 0$, because there is simply no interaction from community $j$ to $i$. Consequently, one may chose to ignore entirely the ODEs corresponding to $z_{ij}$ for which $\beta_{ij}=0$, or set the corresponding adaptivity functions and parameters to zero.} 

From a biological point of view, a natural assumption would be to consider all $\delta_i$ equal, since they represent the inverse of the recovery rate of the same disease. However, each region may have a different healthcare quality, leading to potentially slightly different recovery rates $\delta_i$. Therefore, we keep the model \eqref{eq_animfa} as general as possible.

Equation (\ref{eq_animfa}) is coined the multigroup adaptive N-Intertwined Mean-Field Approximation (multigroup aNIMFA), named after the NIMFA model \cite{IntroNIMFA}. On a technical note, Eq.\ (\ref{eq_animfa}) contains \correz{at most} $n+n^2$ equations \correz{(one for each community, and at most $n^2$ for the connections among them)}, not $n$, but we will adhere to the definition (\ref{eq_animfa}) due its analogy to the static NIMFA model~\cite{IntroNIMFA}. We assume the parameters $\delta_i$, $\beta_{ij}$, $\zeta_{ij}$, $\xi_{ij}$ can be different for each node and link, and are assumed to be non-negative. The standard NIMFA model with link weights $z_{ij}=z_{ij}(0)$ is recovered if $\zeta_{ij}=\xi_{ij}=0$ for all $i$ and $j$. Furthermore, $\fbrhet$ and $\fcrhet$ are assumed to be non-negative for all $0 \leq y \leq 1$ and all $i$ and $j$. We emphasise here that the link density in the aNIMFA model is not necessarily symmetric, i.e.\ $z_{ij} \neq z_{ji}$. This allows us to capture unilateral decisions to limit or encourage movement between community $i$ and community $j$, but not vice versa. For example, we can think of some country A which lifts restrictions on incoming flights from another country B, even though country B still discourages its inhabitants to travel.

In an epidemic context, it is reasonable to assume that the breaking (respectively, creation) of contacts is non-decreasing (non-increasing) with respect to the current prevalence. Indeed, during epidemic peaks, people are more likely to isolate, and during periods of low prevalence, people are likely to enhance their social activities. Mathematically, this translates in the functions $\fbrhet$ and $\fcrhet$ being non-decreasing and non-increasing in all their arguments, respectively. For the remainder of this work, we focus on the epidemic case and therefore assume non-decreasing $\fbrhet$ and non-increasing $\fcrhet$.

Equations (\ref{eq_animfa}) describe the interplay between \emph{local} and \emph{global} awareness, where the functional response of the individuals to the disease is based on the local prevalences~$y_i$ and $y_j$, and global prevalence~$\bar{y}$, respectively. Similarly, the aNIMFA model distinguishes between the within-link-density~$z_{ii}$ and cross-link-density $z_{ij}$, which allows for different or targeted counter-measures in specific communities.

We remark that system \eqref{eq_animfa} may be derived as a mean field limit of a stochastic model, as was done in \cite{achterberg2020classification,achterberg2022moment}. In particular, it is a generalization of \cite[Eq. (4)]{achterberg2022moment}, which allows us to include non-local information on the prevalence in the adaptivity mechanism.

Prior to investigating specific case studies, we first prove several results for the general case of system~\eqref{eq_animfa}.

\section{Analytical results}\label{sec:results}
In this section, we provide numerous analytical results concerning system \eqref{eq_animfa}.

\subsection{Boundedness of the feasible region}\label{sec_bounded}
In the following \correz{Lemma}, we prove that system \eqref{eq_animfa} evolves in the biologically relevant region $[0,1]^{n+n^2}$.
\begin{lemma}\label{lem:bounded}
Consider a solution of system \eqref{eq_animfa} starting at $y_i(0) \in [0,1]$ and $z_{ij}(0) \in [0,1]$ for all $i$ and~$j$. Recall that $\fbrhet(y_i,y_j,\Bar{y}),\fcrhet(y_i,y_j,\Bar{y})\geq 0$ for all $y_i,y_j,\Bar{y}\in [0,1]$ and all $i,j$. Then, $y_i(t),z_{ij}(t) \in [0,1]$ for all $t \geq 0$ and all $i$ and $j$.
\end{lemma}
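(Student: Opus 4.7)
The plan is to show positive invariance of the box $[0,1]^{n+n^2}$ by inspecting the vector field on each face of its boundary and invoking a standard tangentiality (Nagumo-type) argument. Concretely, I will argue that whenever a coordinate is at $0$, its time derivative is non-negative, and whenever it is at $1$, its time derivative is non-positive, so trajectories cannot cross the boundary outwards.

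First I would split the $2n+\binom{n}{?}$ boundary conditions into four types and check each in turn. For the prevalence variables, if $y_i=0$ (and all other $y_j, z_{ij}\in[0,1]$), then
\begin{equation*}
\df{y_i}{t}\bigg|_{y_i=0} \;=\; \sum_{j=1}^{n}\beta_{ij}\,y_j\,z_{ij}\;\geq\;0,
\end{equation*}
since $\beta_{ij}>0$ and $y_j,z_{ij}\geq 0$; and if $y_i=1$ then $(1-y_i)=0$ so $\df{y_i}{t}=-\delta_i<0$. For the link-density variables, using the standing non-negativity hypothesis on $\fbrhet,\fcrhet$, if $z_{ij}=0$ then
\begin{equation*}
\df{z_{ij}}{t}\bigg|_{z_{ij}=0}\;=\;\xi_{ij}\,\fcrhet(y_i,y_j,\bar{y})\;\geq\;0,
\end{equation*}
and if $z_{ij}=1$ then $\df{z_{ij}}{t}=-\zeta_{ij}\fbrhet(y_i,y_j,\bar{y})\leq 0$. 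Each of these four computations is immediate from the definition of the right-hand side of \eqref{eq_animfa}.

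To turn these sign conditions into invariance, I would run the usual ``first exit time'' contradiction. Suppose for contradiction that for some initial condition in $[0,1]^{n+n^2}$ there is a first time $t^*>0$ at which some coordinate leaves the box; by continuity, at $t^*$ that coordinate equals $0$ or $1$ while all others still lie in $[0,1]$. The sign computations above then show that the right-derivative of the offending coordinate at $t^*$ points strictly into (or tangent to) $[0,1]$, contradicting the definition of $t^*$. Because the right-hand side of \eqref{eq_animfa} is locally Lipschitz in $(y,z)$ on a neighbourhood of the closed box (here we are implicitly using that $\fbrhet,\fcrhet$ are at least continuous, as assumed throughout), solutions exist and are unique on $[0,t^*]$, which legitimises this first-exit argument.

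The main technical subtlety, rather than any single calculation, is the corner case where several boundary components are attained simultaneously (e.g.\ $y_i=0$ together with some $z_{ij}=1$): one has to observe that the sign arguments above only use the sign of the \emph{other} coordinates, not their strict positivity, so they remain valid on lower-dimensional faces and corners of $[0,1]^{n+n^2}$. Once this is noted, the proof concludes directly.
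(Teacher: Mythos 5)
Your proof is correct and follows essentially the same route as the paper: the four boundary sign computations you perform are exactly those in the paper's proof, which then simply asserts forward invariance of the box. The additional first-exit-time/Nagumo scaffolding and the remark about corners make explicit what the paper leaves implicit, but do not change the argument.
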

\begin{proof}
    We calculate
    $$
    \df{y_i}{t}\bigg|_{y_i=0}=\sum_{j\neq i} \beta_{ij} y_j  z_{ij}\geq 0, \quad \df{y_i}{t}\bigg|_{y_i=1}=-\delta_i<0,$$
    $$\df{z_{ij}}{t}\bigg|_{z_{ij}=0}=\xi_{ij} \fcrhet(y_i,y_j,\Bar{y}) \geq 0, \quad \df{z_{ij}}{t}\bigg|_{z_{ij}=1}=-\zeta_{ij}\fbrhet(y_i,y_j,\Bar{y}) \leq 0,
    $$
    which proves the forward invariance in the interval $[0,1]$ of each $y_i$, $i=1,\dots,n$, and $z_{ij}$, $i,j=1,\dots,n$.
\end{proof}

\subsection{Disease-Free Equilibrium}\label{sec_dfe}
The Disease-Free Equilibrium (DFE) of system \eqref{eq_animfa} corresponds to the state in which the prevalence in each community is zero, i.e.\ $y_i=0$ for $i=1,2,\dots,n$. Consequently, the link densities at the DFE are
$$
z^{\text{DFE}}_{ij}=\dfrac{\xi_{ij} \fcrhet(\mathbf{0})}{\zeta_{ij} \fbrhet(\mathbf{0})+\xi_{ij} \fcrhet(\mathbf{0})},
$$
where, for ease of notation, we write $\fbrhet(\mathbf{0}):=\fbrhet(0,0,0)$ and $\fcrhet(\mathbf{0}):=\fcrhet(0,0,0)$.

\subsection{Basic Reproduction Number}\label{sec_R0}
We apply the Next Generation Matrix method \cite{van2008further} to compute the Basic Reproduction Number $R_0$ of system \eqref{eq_animfa}. We evaluate the Jacobian relative to the variables $y_i$ at the DFE, and write it as
$$
J=M-V,
$$
where
$$
M_{ij}=\beta_{ij}z^{\text{DFE}}_{ij}, \quad V=\text{diag}(\delta_1,\dots,\delta_n).
$$
\correz{Notice that the matrix $M$ represents infections, whereas the matrix $V$ represents recovery.} Then, the basic reproduction number $R_0$ follows as the largest eigenvalue $\rho$ of the matrix $F$
\begin{equation}\label{eq_R0}
    R_0 = \rho(F),
\end{equation}
where
\begin{equation}\label{eq_Fmatrix}
F=MV^{-1}=\left(  \dfrac{\beta_{ij}}{\delta_i} z^{\text{DFE}}_{ij}\right)_{1\leq i,j \leq n}.
\end{equation}
Next, we consider the following two special cases.

Assuming complete homogeneity of the parameters, i.e. $\delta_i=\delta, \beta_{ij}=\beta, \zeta_{ij}=\zeta$, $\xi_{ij}=\xi$, $\fcrhet(y)=\fcr(y)$ and $\fbrhet(y)=\fbr(y)$ for all $i,j$, the matrix $F$ becomes a rank 1 matrix, since all its entries are equal. Then, $R_0$ is exactly equal to $n$ times the basic reproduction number for a single community \cite[Sec. 3.4]{achterbergsensi2022adaptive}, namely
\begin{equation*}
    R_{0,\text{hom}} = n \dfrac{\beta}{\delta} \dfrac{\xi \fcr(\mathbf{0})}{\zeta \fbr(\mathbf{0})+\xi \fcr(\mathbf{0})}.
\end{equation*}

\correz{If instead we assume} no link-breaking at zero prevalence, i.e.\ $\fbrhet(\mathbf{0})=0$ for all $i,j$, then the steady-state link density $z_{ij}^{\textnormal{DFE}}=1$ for all $i,j$ and the matrix
\begin{equation*}
    F=MV^{-1}=\left(  \dfrac{\beta_{ij}}{\delta_i}\right)_{1\leq i,j \leq n}
\end{equation*}
is completely independent of $\zeta$, $\xi$, $\fbr$ and $\fcr$. Thus, the basic reproduction number $R_0$ is completely independent of the network dynamics, but not from the network topology, and is equivalent to the NIMFA model on a static topology. This can be explained by the local nature of $R_0$ around the DFE; the definition of this threshold quantity is fundamentally related to the local stability of the DFE. Hence, if in the absence of infection the number of contacts between individuals is not decreased, the topology of the network (and not the network dynamics) completely characterizes the potential spread of the disease.

\subsection{Stability of the DFE}
We now use definition (\ref{eq_R0}) to prove \correz{Theorem}~\ref{thm_conv_DFE}.
\begin{theorem}\label{thm_conv_DFE}
    Assume that $\fbrhet(y_i,y_j,\Bar{y})$ and $\fcrhet(y_i,y_j,\Bar{y})$ are respectively non-decreasing and non-increasing in all their arguments for all $i,j$. Then, the DFE of system \eqref{eq_animfa} is globally stable when $R_0<1$.
\end{theorem}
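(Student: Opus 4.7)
The plan is to exploit the prescribed monotonicity of the functional responses to decouple the link dynamics from above and reduce the stability question on the prevalences to a linear comparison controlled by the spectral threshold $R_0$. Lemma~\ref{lem:bounded} already guarantees that trajectories remain in $[0,1]^{n+n^2}$, so every quantity below is non-negative.

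First I would derive a pointwise upper bound on each $z_{ij}(t)$. Since $\fbrhet$ is non-decreasing and $\fcrhet$ is non-increasing in their arguments, and since $y_i,y_j,\bar y\geq 0$, we have $\fbrhet(y_i,y_j,\bar y)\geq \fbrhet(\mathbf{0})$ and $\fcrhet(y_i,y_j,\bar y)\leq \fcrhet(\mathbf{0})$. Using that $z_{ij},1-z_{ij}\geq 0$, the link equation satisfies the scalar differential inequality
\[
\df{z_{ij}}{t}\leq -\zeta_{ij}\fbrhet(\mathbf{0})\,z_{ij}+\xi_{ij}\fcrhet(\mathbf{0})\,(1-z_{ij}),
\]
whose right-hand side is a linear ODE in $z_{ij}$ with unique globally attracting equilibrium $z^{\text{DFE}}_{ij}$. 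By the standard one-dimensional comparison principle, $z_{ij}(t)$ is dominated by the solution of this linear ODE, so $\limsup_{t\to\infty}z_{ij}(t)\leq z^{\text{DFE}}_{ij}$. Hence, for every $\eps>0$ there exists $T=T(\eps)$ such that $z_{ij}(t)\leq z^{\text{DFE}}_{ij}+\eps$ for all $t\geq T$ and all $i,j$.

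Next I would feed this bound into the $y_i$-equations. Dropping the factor $(1-y_i)\leq 1$, for $t\geq T$ the prevalences obey the componentwise inequality
\[
\df{y_i}{t}\leq -\delta_i y_i+\sum_{j=1}^n \beta_{ij}\bigl(z^{\text{DFE}}_{ij}+\eps\bigr)y_j = \bigl((M_\eps-V)y\bigr)_i,
\]
where $M_\eps$ has entries $\beta_{ij}(z^{\text{DFE}}_{ij}+\eps)$ and $M_\eps-V$ is a Metzler matrix. By the classical equivalence for Metzler matrices (see e.g.\ van den Driessche--Watmough), the spectral abscissa $s(M_\eps-V)$ has the same sign as $\rho(M_\eps V^{-1})-1$; since $\rho(M_\eps V^{-1})$ depends continuously on $\eps$ and equals $R_0<1$ at $\eps=0$, for all sufficiently small $\eps$ the matrix $M_\eps-V$ is Hurwitz. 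Applying Kamke's comparison theorem to the Metzler linear system $\dot u=(M_\eps-V)u$ with $u(T)=y(T)$ yields $0\leq y(t)\leq u(t)\to 0$ exponentially, so $y(t)\to\mathbf{0}$.

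Finally, once $y(t)\to\mathbf{0}$ the $z_{ij}$-equation becomes asymptotically autonomous with limiting dynamics
\[
\df{z_{ij}}{t}=-\zeta_{ij}\fbrhet(\mathbf{0})\,z_{ij}+\xi_{ij}\fcrhet(\mathbf{0})\,(1-z_{ij}),
\]
whose unique globally attracting equilibrium on $[0,1]$ is $z^{\text{DFE}}_{ij}$. By the theory of asymptotically autonomous systems (or a direct $\liminf$--$\limsup$ estimate on $z_{ij}$), $z_{ij}(t)\to z^{\text{DFE}}_{ij}$, completing global attractivity; local stability follows from the same Metzler eigenvalue argument at $\eps=0$. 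The main technical obstacle I expect is this continuity/perturbation step, namely ensuring that an $\eps$-enlargement of the DFE link densities still leaves $M_\eps-V$ Hurwitz, since this is precisely the place where the strict threshold $R_0<1$ is exploited. Everything else is a clean combination of scalar and Metzler comparison together with Perron--Frobenius theory for non-negative matrices.
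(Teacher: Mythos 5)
Your proposal is correct and follows essentially the same route as the paper's proof: a scalar comparison bounding $\limsup_{t\to\infty}z_{ij}(t)\leq z^{\text{DFE}}_{ij}$, an $\eps$-perturbed linear Metzler comparison system for the prevalences whose Hurwitz property follows from $R_0<1$ via the van den Driessche--Watmough lemma and continuity of the spectral radius, and finally a squeeze on $z_{ij}$ once $y\to\mathbf{0}$. The only cosmetic differences are that you name Kamke's theorem explicitly where the paper simply says ``by comparison,'' and you offer asymptotically autonomous systems as an alternative to the paper's direct $\liminf$ estimate for the last step.
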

\begin{proof}
    First, we prove that all the prevalence variables $y_i\rightarrow  0$ as $t \rightarrow +\infty$ when $R_0<1$.

    Recall that, from our assumptions, $\fbrhet$ and $\fcrhet$ are respectively non-decreasing and non-increasing in all their arguments. Then, for each triple $(y_i,y_j,\Bar{y})$, we have $\fbrhet(\mathbf{0})\leq \fbrhet(y_i,y_j,\Bar{y})$ and $\fcrhet(\mathbf{0})\geq \fcrhet(y_i,y_j,\Bar{y})$, respectively. Then,
    $$
    \df{z_{ij}}{t}\leq - \zeta z_{ij} \fbrhet(\mathbf{0}) + \xi (1-z_{ij}) \fcrhet(\mathbf{0}), 
    $$
    from which we can deduce
    $$
    \limsup_{t \to \infty} z_{ij}(t) \leq z_{ij}^{\text{DFE}}.
    $$
    This means that, for each $\varepsilon_1>0$, there exists a time $t_{\varepsilon_1}$ such that, for $t\geq t_{\varepsilon_1}$ and for all $i,j$, we have
    $$
    z_{ij}(t) \leq z_{ij}^{\text{DFE}}+\varepsilon_1.
    $$
    This means that, for $t\geq t_{\varepsilon_1}$, we can bound  the first $n$ ODEs of \eqref{eq_animfa} from above, using moreover the fact that $(1-y_i)\leq 1$, by
    \begin{equation}\label{proof_1}
        \df{y_i}{t}\leq -\delta_i y_i + \sum_{j=1}^N \beta_{ij} y_j (z_{ij}^{\text{DFE}}+\varepsilon_1).  
    \end{equation}
    Consider the following auxiliary system, which is obtained by taking equality in \eqref{proof_1}:
    \begin{equation}\label{proof_2}
        \df{w_i}{t}= -\delta_i w_i + \sum_{j=1}^N \beta_{ij} w_j (z_{ij}^{\text{DFE}}+\varepsilon_1).  
    \end{equation}
    Defining the vector $w:=(w_1,\dots,w_n)$, we can rewrite \eqref{proof_2} as
    \begin{equation}
        \df{w}{t}=(M(\varepsilon_1) -V )w ,  
    \end{equation}
    where $V=\text{diag}(\delta_i)$, as in the definition of $R_0$, and $M(\varepsilon_1)=(\beta_{ij}  (z_{ij}^{\text{DFE}}+\varepsilon_1))_{ij}$. We invoke the following \correz{Lemma}:
    \begin{lemma}[\correz{Lemma 2 in \cite{van2008further}}]\label{eigenlemma}
If $M$ is non-negative and $V$ is a non-singular M-matrix, then $R_0=\rho(MV^{-1})<1$
if and only if all eigenvalues of $(M-V)$ have negative real parts.
\end{lemma}
By picking $\varepsilon_1$ small enough, we can ensure $\rho(M(\varepsilon_1)V^{-1})<1$. Then, from \correz{Lemma} \ref{eigenlemma}, for all $i$ we have $w_i(t) \to 0$ as $t \to \infty$, which implies, by comparison, that $y_i(t) \to 0$ as $t \to \infty$. 

This means that, for each $\varepsilon_2 >0$, there exists a time $t_{\varepsilon_2}$ such that, for $t > t_{\varepsilon_2}$ and for $i=1,\dots,n$, we have 
\begin{equation}\label{proof_3}
y_i \leq \varepsilon_2.
\end{equation}
By substituting \eqref{proof_3} in the ODEs for $z_{ij}$, and recalling our assumption of monotonicity of $\fcrhet$ and $\fbrhet$, we obtain the following inequalities:
\begin{equation}
z_{ij}' \geq - \zeta z_{ij} \fbrhet(\varepsilon_2,\varepsilon_2,\varepsilon_2) + \xi (1-z_{ij}) \fcrhet(\varepsilon_2,\varepsilon_2,\varepsilon_2), 
\end{equation}
from which we can deduce
$$
\liminf_{t \to \infty} z_{ij}(t) \geq \dfrac{\xi_{ij} \fcrhet(\varepsilon_2,\varepsilon_2,\varepsilon_2)}{\zeta_{ij}\fbrhet(\varepsilon_2,\varepsilon_2,\varepsilon_2)+ \xi_{ij} \fcrhet(\varepsilon_2,\varepsilon_2,\varepsilon_2)}.
$$
Taking $\varepsilon_1,\varepsilon_2 \to 0$ concludes the proof.
\end{proof}
We conclude the results on stability of the DFE with the following intuitive \correz{Corollary}:\begin{corollary}\label{cor_DFE_unstable}
    The DFE is locally (hence, globally) unstable when $R_0>1$.
\end{corollary}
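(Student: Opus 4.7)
The plan is to show that the Jacobian of the full system \eqref{eq_animfa} evaluated at the DFE has at least one eigenvalue with positive real part, and then observe that local instability is what is meant by (global) instability here (it immediately rules out the DFE being a global attractor).

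First I would compute the Jacobian at the DFE using a block decomposition with the $y$-variables first and the $z$-variables second. At the DFE, $y_i=0$ and $z_{ij}=z_{ij}^{\textnormal{DFE}}$, so $\partial(dy_i/dt)/\partial z_{kl}=\beta_{ik}y_k\delta_{il}=0$, which makes the Jacobian block lower triangular. The upper-left block reduces to $M-V$, with $M_{ij}=\beta_{ij}z_{ij}^{\textnormal{DFE}}$ and $V=\textnormal{diag}(\delta_i)$, exactly as in the derivation of $R_0$ in Section~\ref{sec_R0}. The lower-right block, obtained from differentiating the $z_{ij}$ equations with respect to the $z_{kl}$'s at $\mathbf{y}=\mathbf{0}$, is diagonal with entries $-[\zeta_{ij}\fbrhet(\mathbf{0})+\xi_{ij}\fcrhet(\mathbf{0})]$; these are strictly negative (assuming $\fbrhet(\mathbf{0})$ and $\fcrhet(\mathbf{0})$ are not both zero, which is already implicit in the definition of $z_{ij}^{\textnormal{DFE}}$).

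Because the Jacobian is block triangular, its spectrum is the union of the spectra of the two diagonal blocks. The lower-right block contributes only negative eigenvalues, so the local stability of the DFE is determined entirely by the eigenvalues of $M-V$. Here I would apply Lemma~\ref{eigenlemma} (from~\cite{van2008further}) in the contrapositive: since $M$ is entrywise non-negative and $V$ is a non-singular M-matrix, $R_0=\rho(MV^{-1})>1$ implies that $M-V$ has at least one eigenvalue with positive real part. Therefore the DFE is locally (exponentially) unstable.

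Finally, for the parenthetical ``hence, globally'': the meaning is that a locally unstable equilibrium cannot be a global attractor, since trajectories starting arbitrarily close (but off the stable manifold, if any) leave a neighbourhood of the DFE and do not return. Combined with the forward invariance of the compact feasible region $[0,1]^{n+n^2}$ established in Lemma~\ref{lem:bounded}, this means solutions must accumulate elsewhere in the feasible set, so the DFE fails to be globally asymptotically stable. I do not expect any real obstacle: the calculation of the Jacobian at the DFE is routine, the block-triangular structure is the key observation, and the rest is a direct invocation of Lemma~\ref{eigenlemma}. The only minor care point is ensuring that $z_{ij}^{\textnormal{DFE}}$ is well-defined, which is the standing non-degeneracy assumption throughout Section~\ref{sec_R0}.
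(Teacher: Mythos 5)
Your proof is correct and follows essentially the same route the paper takes, since the paper simply defers to the second half of the proof of Theorem~1 in \cite{van2008further}: linearize at the DFE, note that the Jacobian is block triangular (the upper-right block vanishes because the coupling term $(1-y_i)\sum_j\beta_{ij}y_jz_{ij}$ has zero $z$-derivative at $\mathbf{y}=\mathbf{0}$) so that stability is governed by $M-V$ together with a strictly negative diagonal block, and then convert the sign of $R_0-1$ into the sign of the spectral abscissa of $M-V$. One small care point: the contrapositive of Lemma~\ref{eigenlemma} as quoted only yields an eigenvalue of $M-V$ with \emph{nonnegative} real part when $R_0>1$; the strictly positive real part needed for instability is the companion statement in \cite{van2008further} (equivalently, it follows because $M-V$ has nonnegative off-diagonal entries, so its spectral abscissa is itself a real eigenvalue whose sign agrees with that of $\rho(MV^{-1})-1$).
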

The proof of \correz{Corollary} \ref{cor_DFE_unstable} coincides, up to extremely minor adjustements, with the second half of the proof of \correz{Theorem 1 in \cite{van2008further}}, which in turn makes use of a few smaller results presented in the same paper; for the sake of brevity, we do not repeat it here.

\subsection{Endemic Equilibria}
We now prove, under slightly more restrictive assumptions than $R_0>1$, the existence of (at least one) Endemic Equilibrium (EE)\correz{, in which the prevalence in each community is strictly positive, i.e.\ $y_i>0$ for $i=1,2,\dots,n$. Indeed, assuming that at equilibrium one $y_i>0$ necessarily means that all adjacent communities must have strictly positive prevalence in equilibrium. Since the network is assumed to be \correz{strongly connected}, this implies that all $y_i$ are strictly positive at the EE}. We remark that, in all generality, we can not prove stability nor instability of the EE(s). We do not prove uniqueness and, as we shall see in Section \ref{sec:simul}, system \eqref{eq_animfa} admits limit cycles even in a network with only \emph{two} communities.
\begin{theorem}\label{thm:homog}
Assume that the matrix $F$ from Eq.\ \eqref{eq_Fmatrix} is such that the minimum row/column sum is strictly bigger than 1. Then, system \eqref{eq_animfa} admits at least one EE.
\end{theorem}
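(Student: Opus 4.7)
The plan is to recast the existence of an endemic equilibrium as a nontrivial fixed-point problem on $[0,1]^n$ and then apply Brouwer's theorem. Setting $\df{z_{ij}}{t}=0$ in~\eqref{eq_animfa}, the link densities at any equilibrium are uniquely expressed in terms of the prevalences as
$$z_{ij}^{*}(y)=\dfrac{\xi_{ij}\fcrhet(y_i,y_j,\bar{y})}{\zeta_{ij}\fbrhet(y_i,y_j,\bar{y})+\xi_{ij}\fcrhet(y_i,y_j,\bar{y})},$$
which is continuous in $y$. Substituting into the first block of~\eqref{eq_animfa} reduces the problem to finding a nontrivial fixed point $y^{*}\in[0,1]^n\setminus\{0\}$ of the continuous self-map $T:[0,1]^n\to[0,1]^n$ given by $T_i(y)=N_i(y)/(\delta_i+N_i(y))$, where $N_i(y):=\sum_{j}\beta_{ij}y_j z_{ij}^{*}(y)$. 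Clearly $T(0)=0$, and a direct differentiation at the origin yields $T'(0)=F$, the next-generation matrix of~\eqref{eq_Fmatrix}.

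The row/column sum hypothesis then provides the required quantitative spectral information. If the minimum row sum of $F$ is $\alpha>1$ (the column case is symmetric via $F^{\top}$), then $Fu\ge\alpha u$ componentwise with $u=(1,\dots,1)^{\top}$, so that $\rho(F)\ge\alpha>1$, and the Perron--Frobenius theorem supplies a strictly positive eigenvector $v>0$ with $Fv=\rho(F)v$. The first-order expansion $T(\epsilon v)=\epsilon\rho(F)v+o(\epsilon)$ then shows that $T(\epsilon v)>\epsilon v$ componentwise for every sufficiently small $\epsilon>0$, so that $\epsilon v$ is a strict sub-solution. The plan is to construct a compact convex $K\subset[0,1]^n$ with $0\notin K$ and $T(K)\subseteq K$, at which point Brouwer's theorem would deliver $y^{*}\in K$ with $T(y^{*})=y^{*}$; the corresponding endemic equilibrium of~\eqref{eq_animfa} is then $(y^{*},z^{*}(y^{*}))$, with $y^{*}>0$ componentwise by construction.

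The principal obstacle is the invariance $T(K)\subseteq K$: the map $T$ is not order-preserving in general, because $z_{ij}^{*}(y)$ is non-increasing in every argument (by the monotonicity assumptions on $\fbrhet$ and $\fcrhet$), so the product $y_j z_{ij}^{*}(y)$ in $N_i(y)$ can decrease as some $y_k$ grows. Consequently, a monotone iteration from the sub-solution $\epsilon v$ need not converge, and a box $K=[c,1]^n$ only works if $z_{ij}^{*}$ stays uniformly bounded away from zero. A robust remedy is a topological-degree argument: the total Brouwer degree of $I-T$ on the interior of $[0,1]^n$ equals $1$ (since $T$ maps the cube strictly into $[0,1)^n$), while the local index of $I-T$ at the unstable fixed point $0$ is $\operatorname{sign}\det(I-F)$, which differs from $1$ as soon as $\rho(F)>1$ and $1\notin\sigma(F)$. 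The resulting degree discrepancy forces the existence of at least one additional fixed point of $T$ in $(0,1)^n$, which is precisely an endemic equilibrium of~\eqref{eq_animfa}; the strict inequality in the row/column sum assumption is what quantitatively guarantees $\rho(F)>1$ and the consequent change of local index at the origin.
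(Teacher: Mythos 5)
Your reduction to a fixed-point problem for $T_i(y)=N_i(y)/(\delta_i+N_i(y))$ is a genuinely different route from the paper, which works directly with the vector field: the authors show $\df{y_i}{t}\big|_{y_i=1}<0$ and $\df{y_i}{t}\big|_{y_i=\varepsilon}>0$ and invoke the Poincar\'e--Miranda theorem on the box $[\varepsilon,1]^n$, using the row-sum hypothesis explicitly to get the sign on the inner faces. You correctly derive $z_{ij}^{*}(y)$, $T'(0)=F$, $\rho(F)>1$, and the sub-solution $\epsilon v$, and you are right to flag that $T$ is not order-preserving, so a monotone iteration or a box invariance argument is unavailable. The problem is that your proposed remedy does not close the gap.

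Concretely, the claim that the local index of $I-T$ at $0$, namely $\operatorname{sign}\det(I-F)$, ``differs from $1$ as soon as $\rho(F)>1$'' is false: $\det(I-F)=\prod_k(1-\lambda_k)$, and if two real eigenvalues of $F$ exceed $1$ (e.g.\ $F=\bigl(\begin{smallmatrix}2 & 0.1\\ 0.1 & 2\end{smallmatrix}\bigr)$, whose row sums are $2.1>1$ and whose eigenvalues are $2.1$ and $1.9$), then $\det(I-F)>0$ and the local index equals the total degree, so no discrepancy arises and no second fixed point is forced. The correct version of this argument is Krasnoselskii's fixed-point index \emph{in the cone} $[0,1]^n$, where expansion along the Perron direction forces the index at $0$ to be $0$ rather than $\pm 1$; the plain Brouwer degree in $\R^n$ does not see this. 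A further technical issue is that $0$ is a fixed point of $T$ lying on the boundary of $[0,1]^n$, so ``the total degree of $I-T$ on the interior of the cube equals $1$'' is not well-posed without first excising a neighbourhood of $0$. Finally, note that your remedy uses only $\rho(F)>1$ and nowhere the full strength of the row/column-sum hypothesis; if it worked it would prove the paper's open Conjecture, which the authors explicitly state their method cannot reach --- a strong signal that the argument as sketched cannot be complete. To repair your write-up along the paper's lines, use the row-sum condition to verify the Poincar\'e--Miranda sign conditions on the faces $\{y_i=\varepsilon\}$ and $\{y_i=1\}$ of $[\varepsilon,1]^n$ for $\varepsilon$ small, which sidesteps both the invariance and the index computation.
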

\begin{proof}
We know that
$$
\min \text{ row/column sum of }F \leq \rho(F) \leq \max \text{ row/column sum of }F,
$$
hence under our assumption, $\rho(F)=R_0>1$.
An equilibrium of system \eqref{eq_animfa} necessarily satisfies 
$$
z^{*}_{ij}=\dfrac{\xi_{ij} \fcrhet(y_i, y_j, \bar{y})}{\zeta_{ij} \fbrhet(y_i, y_j, \bar{y})+\xi_{ij} \fcrhet(y_i, y_j, \bar{y})}.
$$
Substituting this expression for $z^{*}_{ij}$ in the first $n$ equations of \eqref{eq_animfa}, we obtain
\begin{equation}
    \label{eq_for_proof1}
    -\delta_i y_i + (1-y_i) \sum_{j=1}^n \beta_{ij} y_j  \dfrac{\xi_{ij} \fcrhet(y_i, y_j, \bar{y})}{\zeta_{ij} \fbrhet(y_i, y_j, \bar{y})+\xi_{ij} \fcrhet(y_i, y_j, \bar{y})}=0.
\end{equation}
Since 
$$ \df{y_i}{t}\bigg|_{y_i=1}=-\delta_i<0,
    $$
    if we show that for some small $\varepsilon>0$ we have 
$$ \df{y_i}{t}\bigg|_{y_i=\varepsilon}>0,
    $$
for all $i$, we can apply the  Poincaré-Miranda Theorem \cite{kulpa1997poincare,mawhin2019simple} to conclude the existence of \emph{at least} one EE. 

If we study the sign of \eqref{eq_for_proof1} at $y_i =\varepsilon$ for all $i$, we obtain
 \begin{equation}
    \label{eq_for_proof2}
    -\delta_i \varepsilon + (1-\varepsilon) \sum_{j=1}^n \beta_{ij} \varepsilon  \dfrac{\xi_{ij} \fcrhet(\varepsilon, \varepsilon, \varepsilon)}{\zeta_{ij} \fbrhet(\varepsilon, \varepsilon, \varepsilon)+\xi_{ij} \fcrhet(\varepsilon, \varepsilon, \varepsilon)}>0.
\end{equation}   
We can simplify both sides of \correz{Equation} \eqref{eq_for_proof2} by $\varepsilon$, and rearrange it to obtain 
 \begin{equation}
    \label{eq_for_proof3}
\sum_{j=1}^n \dfrac{\beta_{ij}}{\delta_i} \dfrac{\xi_{ij} \fcrhet(\varepsilon, \varepsilon, \varepsilon)}{\zeta_{ij} \fbrhet(\varepsilon, \varepsilon, \varepsilon)+\xi_{ij} \fcrhet(\varepsilon, \varepsilon, \varepsilon)}>\dfrac{1}{1-\varepsilon}.
\end{equation}
If we take $\varepsilon \to 0$, the previous equation coincides with requiring that the sum of row $i$ of the matrix $F$ \eqref{eq_Fmatrix} is (strictly) greater than $1$. This must hold for all $i$, which is a consequence on our assumption on the minimum of such sums. Then, \eqref{eq_for_proof3} holds for $\varepsilon>1$ small enough. This concludes the proof.
\end{proof}
We conjecture the following, based on our extensive numerical simulations:
\begin{conjecture}
  System \eqref{eq_animfa} admits at least one EE when $R_0>1$. 
\end{conjecture}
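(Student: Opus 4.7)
The plan is to replace the coordinate-wise argument of Theorem~\ref{thm:homog} with a topological/persistence-theoretic approach that works whenever $\rho(F)>1$. By Lemma~\ref{lem:bounded}, the phase space $X=[0,1]^{n+n^2}$ is compact and forward invariant; I would let $X_0=\{(y,z)\in X : y_i=0 \text{ for all } i\}$ denote the extinction face. The set $X_0$ is forward invariant, and on $X_0$ the $z$-equations decouple into linear scalar ODEs whose unique globally attracting equilibrium is $z_{ij}^{\text{DFE}}$, so the only compact invariant set inside $X_0$ is the DFE itself.

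The first substantive step is to extract an instability direction from $R_0>1$. Since the contact network $B$ is irreducible, so is $M$ and hence $F=MV^{-1}$; by Perron-Frobenius, $R_0=\rho(F)$ is a simple eigenvalue with a strictly positive right eigenvector. Correspondingly, $M-V$ is an irreducible Metzler matrix, so its spectral abscissa $\lambda$ is real, simple, and admits a strictly positive left eigenvector $v\in\R^n$ with $v>0$; by Lemma~\ref{eigenlemma}, the condition $R_0>1$ forces $\lambda>0$. I would then use $L(y)=v^\top y$ as a Lyapunov function and show that in some neighbourhood of the DFE in $X$ one has $\dot L \ge (\lambda/2)\, L$. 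This reduces to controlling $z_{ij}$ near $z_{ij}^{\text{DFE}}$: continuity of $\fbrhet$ and $\fcrhet$ guarantees that for every $\varepsilon>0$ there is a neighbourhood of the DFE in which $|z_{ij}(t)-z_{ij}^{\text{DFE}}|<\varepsilon$ as long as $y$ remains small, and this is enough to absorb the error relative to the leading linear term $(M-V)y$. It then follows that the DFE is a \emph{uniform weak repeller} for $X\setminus X_0$: every orbit with $y(0)\neq 0$ eventually leaves a prescribed neighbourhood of the DFE.

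The final step is a standard persistence-to-equilibrium argument. Because the DFE is the unique, and hence acyclic and isolated, invariant set of the compact forward-invariant boundary $X_0$, uniform weak repulsion combined with dissipativity on the compact space $X$ upgrades to \emph{uniform persistence} of the semiflow with respect to $X_0$ (Thieme, 1993; Smith and Thieme, \emph{Dynamical Systems and Population Persistence}). For such a uniformly persistent dissipative semiflow on a compact set, a classical result (Hutson, 1984; see also Zhao, \emph{Dynamical Systems in Population Biology}, Thm.~1.3.7) produces an equilibrium in the interior $X\setminus X_0$, which is by construction an endemic equilibrium of \eqref{eq_animfa}.

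The principal obstacle is the weak-repulsion step: one needs to argue cleanly that the coupled $(y,z)$-linearisation at the DFE is dominated by the $(M-V)$ block, despite the fact that the $z_{ij}$ may transiently exceed $z_{ij}^{\text{DFE}}$, so that the one-sided bound used in the proof of Theorem~\ref{thm_conv_DFE} runs in the wrong direction here. The cleanest workaround is to pass to the asymptotically autonomous limit system obtained by freezing $z_{ij}\equiv z_{ij}^{\text{DFE}}$, establish persistence for the reduced $n$-dimensional $y$-system (for which $L(y)=v^\top y$ yields exponential growth near the origin immediately), and then lift persistence back to the full $(y,z)$-system via Thieme's theorem on asymptotically autonomous flows.
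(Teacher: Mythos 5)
The statement you are proving is stated in the paper only as a conjecture: the authors prove the weaker Theorem~\ref{thm:homog} (existence of an EE under the stronger hypothesis that every row sum of $F$ exceeds $1$) via Poincar\'e--Miranda, and explicitly leave the case of a general $F$ with $\rho(F)>1$ open. Your persistence-theoretic route is therefore genuinely different from anything in the paper, and it is the standard machinery for exactly this kind of claim: compact invariant phase space (Lemma~\ref{lem:bounded}), the extinction face $X_0$ carrying only the DFE as compact invariant set, weak repulsion of the DFE from $R_0>1$, uniform persistence by Thieme/Hale--Waltman, and an interior equilibrium from Zhao's Theorem~1.3.7 (the relevant interior set $\{(y,z)\in X: y\neq 0\}$ is indeed convex and relatively open, as that theorem requires). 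If completed, this would settle the conjecture, whereas the paper's Poincar\'e--Miranda argument cannot be pushed below the row-sum threshold. One remark in your favour: the ``principal obstacle'' you identify is not actually an obstacle. The weak-repulsion step is a proof by contradiction in which the orbit is \emph{assumed} to remain in a neighbourhood $U_\eta$ of the DFE, and membership in $U_\eta$ already gives the two-sided bound $|z_{ij}-z_{ij}^{\text{DFE}}|<\eta$; the one-sided comparison of Theorem~\ref{thm_conv_DFE} is not needed, and the detour through asymptotically autonomous systems is unnecessary (and not quite the right framework, since the full system is not asymptotically autonomous to the frozen one off the boundary).

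Two genuine gaps do need attention. First, your Perron--Frobenius step requires irreducibility of $M=(\beta_{ij}z_{ij}^{\text{DFE}})$, not of $B=(\beta_{ij})$: if $\fcrhet(\mathbf{0})=0$ for some pair $(i,j)$ with $\beta_{ij}>0$, then $z_{ij}^{\text{DFE}}=0$ and $M$ may be reducible even though $B$ is not, in which case the left eigenvector $v$ of $M-V$ is only non-negative and $L(y)=v^\top y$ can vanish on nonzero initial data, breaking the repulsion argument; you must either assume $z_{ij}^{\text{DFE}}>0$ wherever $\beta_{ij}>0$ (the natural epidemiological hypothesis) or restrict to the dominant irreducible block. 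Relatedly, if $\zeta_{ij}\fbrhet(\mathbf{0})+\xi_{ij}\fcrhet(\mathbf{0})=0$ for some pair, the $z_{ij}$-dynamics on $X_0$ are trivial and $X_0$ contains a continuum of equilibria, destroying the ``DFE is the unique invariant set in $X_0$'' claim (though the DFE itself is then ill-defined, so this is best excluded by hypothesis). Second, the entire semiflow/persistence apparatus requires the solution map to be a continuous semiflow, hence at least local Lipschitz continuity of $\fbrhet$ and $\fcrhet$; the paper assumes only non-negativity and monotonicity, and its own Case study~1 uses discontinuous indicator responses, so your result would prove the conjecture only within the class of (at least) Lipschitz functional responses. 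With these hypotheses made explicit, the argument goes through.
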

We remark, moreover, that the number of EE heavily depends on the specific choices for the function $\fcrhet,\fbrhet$. In order to reach stronger conclusions on their cardinality and stability, one needs to either select specific examples or impose additional conditions.

\section{Numerical simulations}\label{sec:simul}
The complexity of the governing \correz{Equations} \eqref{eq_animfa} complicates deriving further results on the dynamics. We therefore resort to numerical simulations of system \eqref{eq_animfa} in specific scenarios. The simulations in this section have been executed in Matlab and are based on a simple Forward Euler scheme with time step $\Delta t = 0.01$ of system~\eqref{eq_animfa}. \correz{In the simulations, we were careful to account for the fact that the adaptivity of the weights $z_{ij}$   may vary the intensity of contacts between two communities connected by the corresponding $\beta_{ij}$, but does not create new connections not encoded in the adjacency matrix $B$. This consideration ensures that the numerical implementation is efficient, as it reduces the number of differential equations that need to be integrated.} \correz{The parameters $\delta_i$, $\beta_{ij}$, $\zeta_{ij}$ and $\xi_{ij}$ are all rates, i.e. $1$ over unit time.}

We present \correz{three} case studies on the multigroup aNIMFA model, each discussing a key aspect of the interplay between infectious disease dynamics and disease awareness.

\correz{The codes used to produce the simulations contained in this section are available in the GitHub repository\footnote{\url{https://github.com/SaraSottile/AdaptativeMeanField}}. The parameters for Case Study 1 are provided in Table \ref{tab:param1}, while the parameters for Case Studies 2 and 3 can be generated from the codes by using the provided random seed. All the Figures are based on a single realization of the code. Refer to Section \ref{sec:codes} for the codes used to produce all the Figures.}

\subsection{Case study 1: Periodic behaviour}\label{C1}
Due to the interplay between disease dynamics and human behaviour, one would expect that periodicity emerges naturally in the multigroup aNIMFA model. For the aNIMFA model in one isolated community, it was recently proven that the dynamical equations do not admit periodic behaviour \cite{achterbergsensi2022adaptive}. For the multigroup aNIMFA model, we demonstrate here that periodicity may occur with just two communities for certain choices of the functional responses $\fbr$ and $\fcr$.

\begin{figure*}[!ht]
    \centering
    \subfloat[\label{fig_ex0a} Local and average prevalence.]{%
        \includegraphics[width=0.49\textwidth]{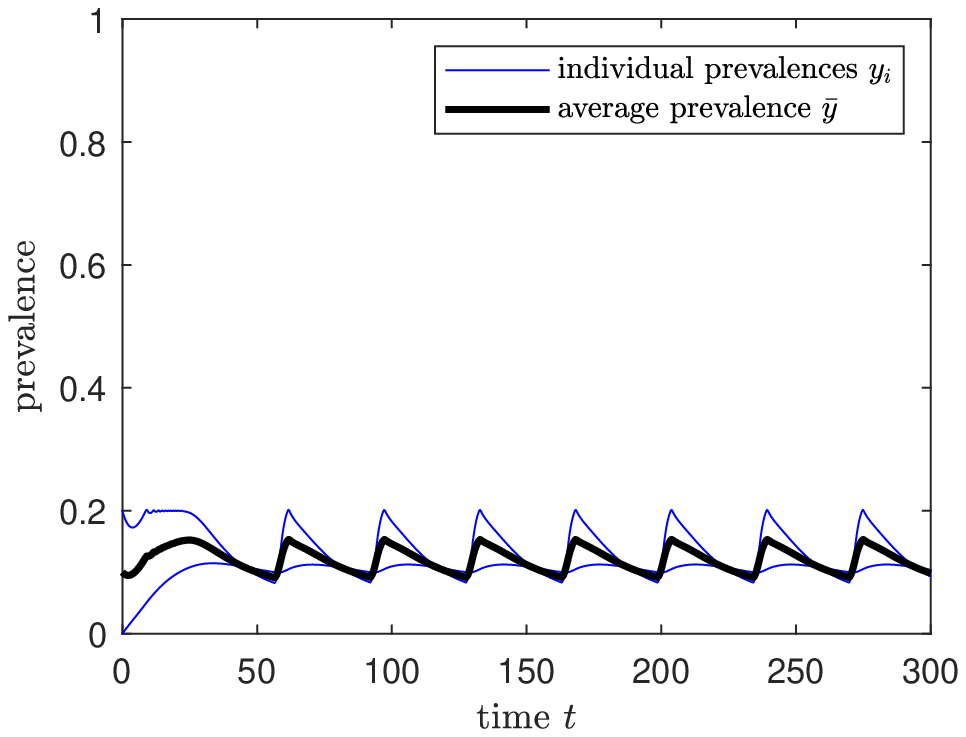}
    }
        \subfloat[\label{fig_ex0b} Link densities.]{%
        \includegraphics[width=0.49\textwidth]{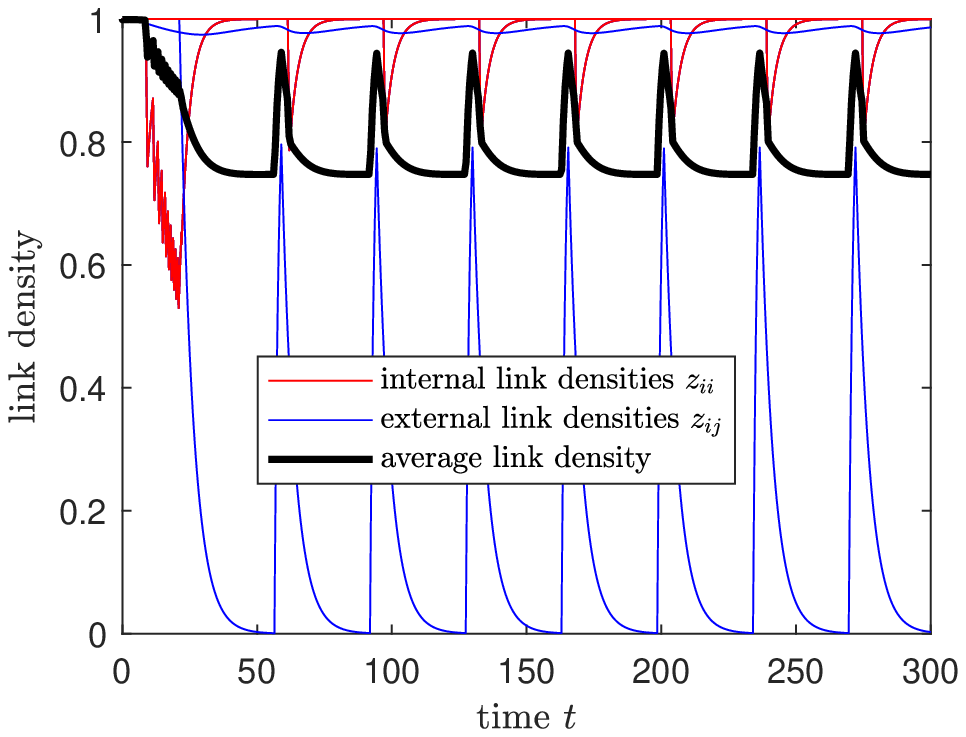}
    }
    \caption{Case study 1: effect of asymmetric response between two communities. Parameters are \correz{listed in Table \ref{tab:param1}}. (a) local and average prevalence; (b) link densities. We observe a quick convergence towards a stable limit cycle, representing endemicity of the disease alternating between high and low prevalence in the population.}
    \label{fig_ex0b_p}
\end{figure*}

\begin{table}[]
    \centering
  \correz{  \begin{tabular}{|c|c|}
 \hline   Parameter & Value\\\hline
       $n$  & $2$ \\\hline
        $\beta_{ij}$ & $\left(\begin{array}{cc}
           0.52  & 0.66 \\
       0.03      & 0.52
        \end{array}\right)$ \\\hline
        $\delta_1=\delta_2$ & $
           0.50$\\\hline
           $\zeta_{ij} $&$\left(\begin{array}{cc}
          0.42&0.20\\
0.33 &	0.62
        \end{array}\right)$\\\hline
        $\xi_{ij}$ &$\left(\begin{array}{cc}
           0.30  & 0.62 \\
       0.27 &0.53
        \end{array}\right)$\\
        \hline
        $z_{ij}(0)$ &  $\left(\begin{array}{cc}
           1  & 1 \\
       1 & 1 
        \end{array}\right)$\\\hline
        $y(0)$ & $(0.2,0)$\\\hline
    \end{tabular}}
    \caption{\correz{Parameters used for Case study 1, see Figure \ref{fig_ex0b_p}. These values were obtained by sampling, respectively: $\beta_{ij} \sim U([0,1.2]), \delta_i = U([0,0.5]), \zeta_{ij} \sim U([0,1]), \xi_{ij} \sim U([0,1])$ for $i,j=1,2$, where $U$ denotes the uniform distribution.}}
    \label{tab:param1}
\end{table}

We consider system \eqref{eq_animfa} on a small network of $n=2$ communities, and consider an asymmetry in the functional responses between the two communities. We assume identical internal policies:
$$
\fcrsame(y_i,\bar{y})=\mathbbm{1}_{\{y_i\leq 0.2\}}, \quad \fbrsame(y_i,\bar{y})=\mathbbm{1}_{\{y_i> 0.2\}}, \quad i=1,2, 
$$
meaning a strict lockdown is imposed within each community as soon as the internal prevalence exceeds a threshold value of $0.2$, regardless of the average global prevalence. Oppositely, we assume that connections between the two communities are governed by asymmetrical rules:
$$
f_{\text{cr},12}(y_1,y_2,\bar{y})=\mathbbm{1}_{\{y_2\leq 0.1\}}, \quad f_{\text{br},12}(y_1,y_2,\bar{y})=\mathbbm{1}_{\{y_2> 0.1\}},$$
$$
f_{\text{cr},21}(y_1,y_2,\bar{y})=1-y_1y_2, \quad f_{\text{br},21}(y_1,y_2,\bar{y})=y_1y_2.
$$
This choice results, for values of the parameters corresponding to $R_0\approx 1.33$, in local and average prevalence, as well as all the link densities, quickly approaching stable limit cycles, as illustrated in Figure \ref{fig_ex0b_p}.

This asymmetry can be interpreted as community 1 being the ``hub'' in this network of communities, and community 2 being a peripheral node. The same behaviour can be observed with bigger communities, for example a star network of $n\geq 3$ communities with asymmetric responses between hub and peripheral nodes and vice versa. However, the simplest case of $n=2$ presented in Figure~\ref{fig_ex0b_p}, which corresponds to a system of only six ODEs, perfectly exemplifies the potential of our modelling approach in producing complex behaviour starting from a simple SIS epidemic model.

Even though the provided functional responses contain indicator functions, which are not $\mathcal{C}^1$-functions over the interval $[0,1]$, we emphasise that other choices of $f_{\text{cr,ii}}$, $f_{\text{br,ii}}$, $f_{\text{cr,ij}}$ and $f_{\text{br,ij}}$ can also lead to periodic behaviour. The benefit of having $\mathcal{C}^1$-functions is that the solution is guaranteed to be unique. One method is to approximate the indicator function, for which we verified that periodic solutions also occur. \correz{We refer, for a few options of smooth approximations of the indicator function, to Chapter 5.3 in \cite{evans2022partial}.}

Lastly, we remark that the existence of limit cycles is completely independent of the dependence of awareness on the \emph{global} prevalence $\bar{y}$. We investigate the influence of the dependence on the global prevalence in Section~\ref{cs5}.

\subsection{Case study \correz{2}: Internal and external connections}\label{C3}
To mitigate epidemics, policy-makers have several tools available. In the early phases of an epidemic, in which vaccines and drugs are still unavailable, only Non-Pharmaceutical Interventions (NPIs) can be utilised. Link-breaking policies are an important part of these NPIs. It is, however, not immediately clear whether it is preferable to remove connections within or between communities. By including a parameter in the system which weighs the relative magnitude of the corresponding response functions, we model a range of possible scenarios, ranging from total internal awareness and adaptivity (i.e., containment measures are taken \emph{within} each community) to completely total external awareness (i.e., containment measures are taken \emph{between} each couple of connected communities). We do so by considering the following choice for our functions:

\begin{align}\label{eq_cs3_fbr}
    \begin{split}
        \fbrsame &= c y_i^2, \\
        \fbrhet &= \frac{1}{c} y_i y_j.
    \end{split}
\end{align}
The constant $c$ allows for the balancing of the importance of internal and external responses to the epidemic. For $c=1$, the link-breaking functional response for external and internal connections is equivalent. Otherwise, for $c > 1$, links are broken at a higher rate \emph{within} communities and for $c<1$, links are broken at a higher rate \emph{between} communities. \sara{The remainder of the parameters and the structure of the complete graph can be found in the codes provided in  Section \ref{sec:codes}}. Figure~\ref{fig_cs3} depicts the time-evolving prevalence and link densities for three different situations. As expected, high link-breaking rates within communities results in less links within communities, and vice versa.

\begin{figure*}[!ht]
    \centering
    \subfloat[\label{fig_cs3a} Mainly break within communities]{%
        \includegraphics[width=0.33\textwidth]{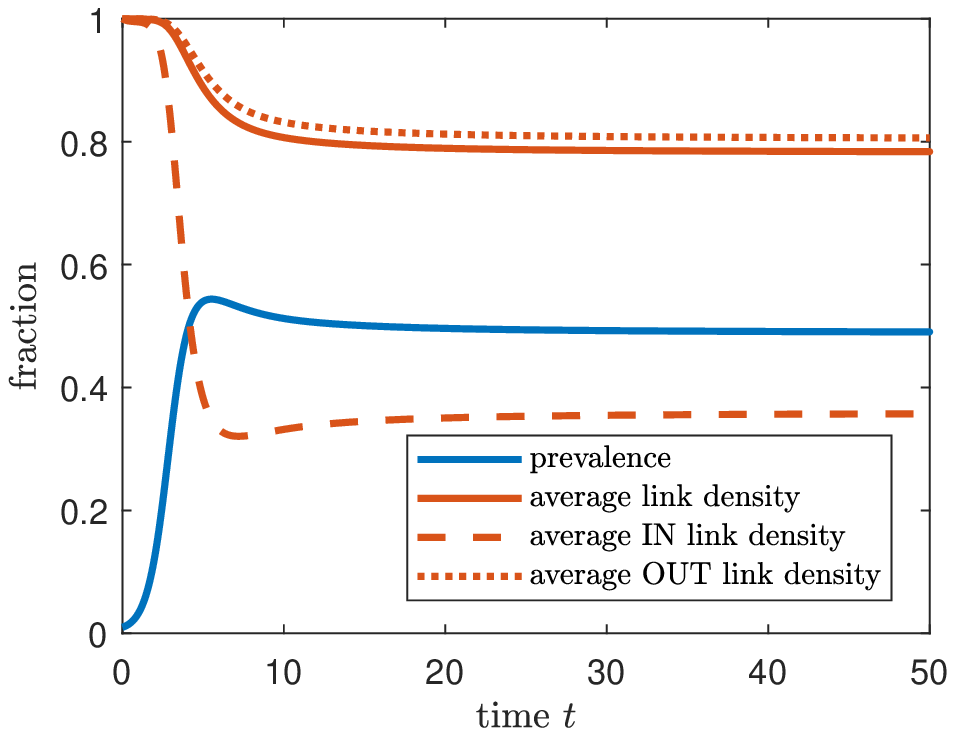}
    }
    \subfloat[\label{fig_cs3b} Balanced situation]{%
        \includegraphics[width=0.33\textwidth]{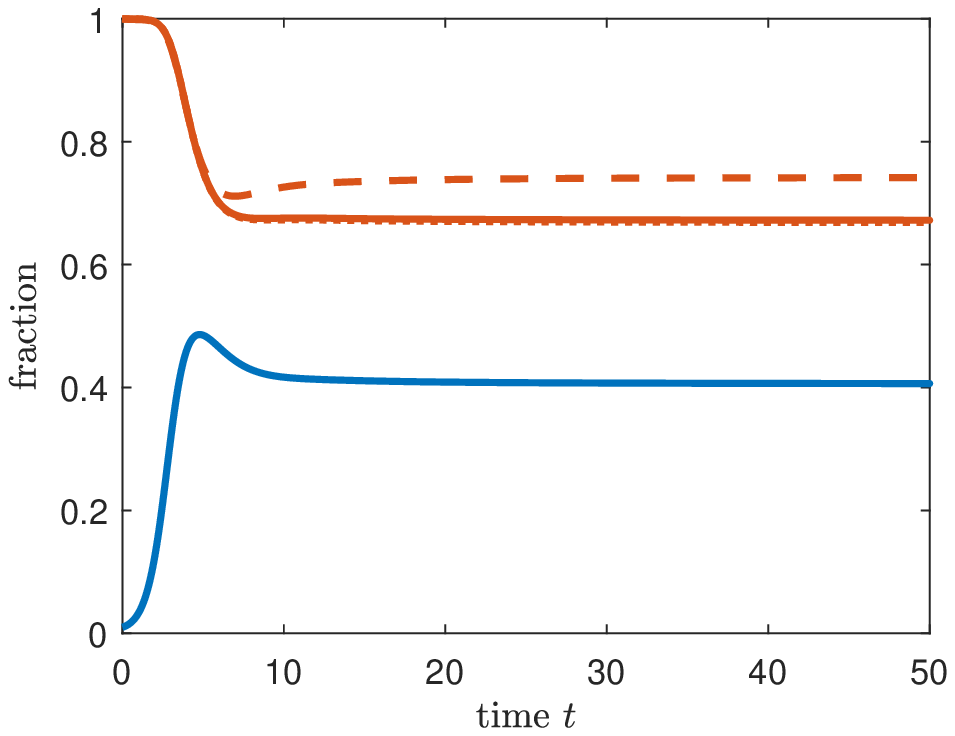}
    }
        \subfloat[\label{fig_cs3c} Mainly break between communities]{%
        \includegraphics[width=0.33\textwidth]{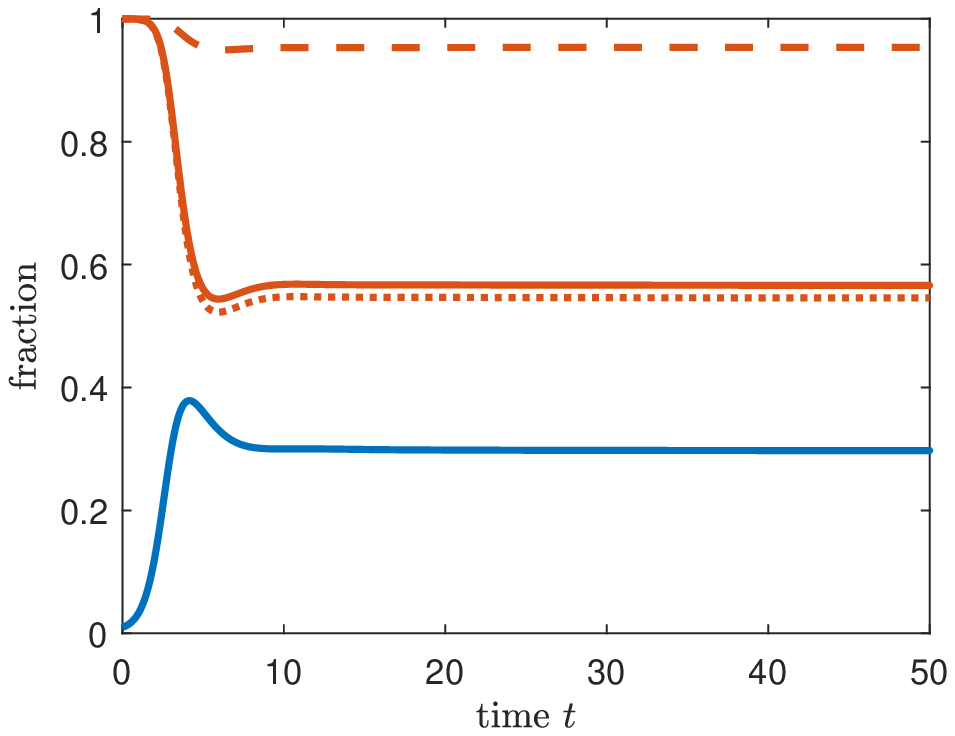}
    }
    \caption{Case study \correz{2}: influence of breaking within or between communities on the prevalence and link densities. \correz{For each figure we use $n=20$ and the values of parameters were obtained by sampling,
respectively: $\beta_{ij} \sim U([0,0.25]), \delta_i = 1, \zeta_{ij} \sim U([0,2]), \xi_{ij} \sim U([0,1])$ for all $i,j$, where $U$ denotes the uniform distribution}, resulting in $R_0 = 2.53$, with the exception that we used for (a), $c=4$, for (b), $c=1$ and for (c), $c=0.25$.}
    \label{fig_cs3}
\end{figure*}

Figure~\ref{fig_cs3} also suggests that breaking connections between communities may be preferable compared to breaking connections within communities to reduce the epidemic outbreak. We quantify the effectiveness of all methods using the peak prevalence~$y_p$ and the steady-state prevalence~$\yinfty$.

Figure~\ref{fig_cs3_complete_optimal} confirms for a wide range of $c$-values that breaking connections between communities is more successful in reducing the epidemic outbreak size than breaking connections within communities. We emphasise that this conclusion is highly dependent on the choice of the functional responses as well as the topology; different choices may result in significantly different behaviour.

\begin{figure*}[!ht]
    \centering
    \subfloat[\label{fig_cs3_complete_optimal} complete graph]{
    \includegraphics[width=0.49\textwidth]{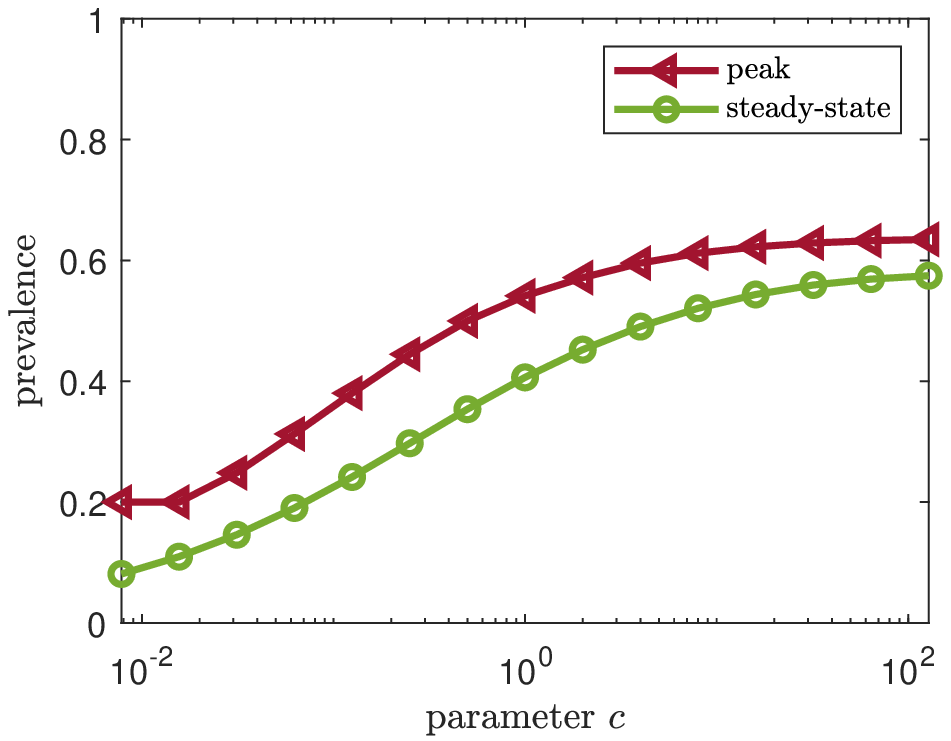}
    }
    \subfloat[\label{fig_cs3_cycle_optimal} cycle graph]{
    \includegraphics[width=0.49\textwidth]{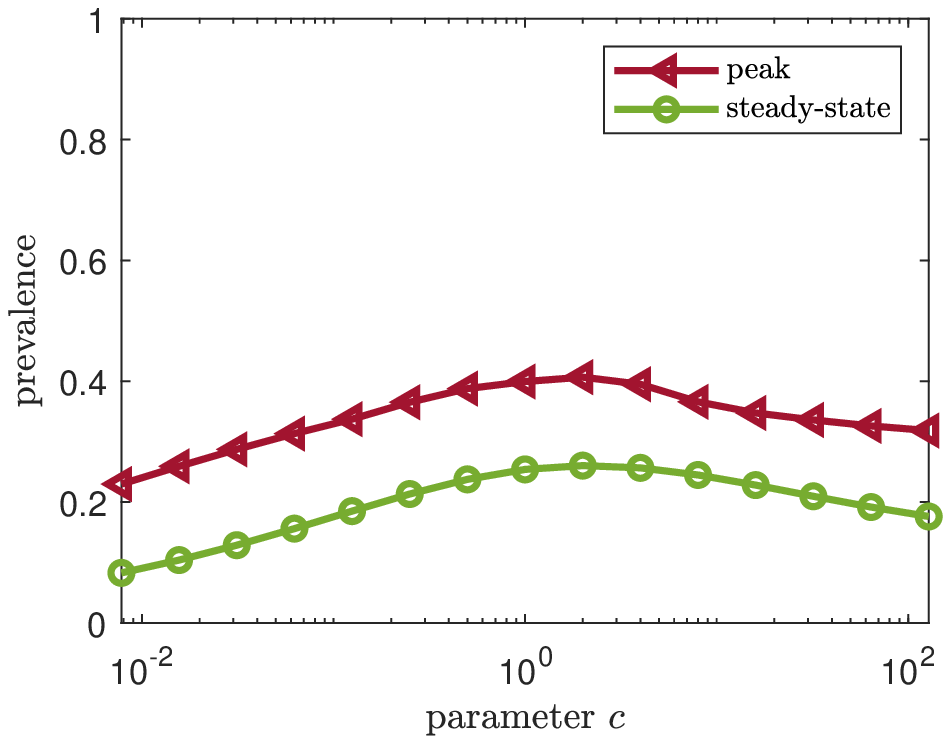}
    }
    \caption{Case study \correz{2}: impact of the parameter $c$ on the steady-state prevalence $\yinfty$ and the peak prevalence $y_p$ in the (a) complete graph and (b) cycle graph. Parameters are the same as Figure~\ref{fig_cs3}, except for (b), we have chosen \correz{a different sampling for $\beta_{ij} \sim U([0,1])$, where $U$ denotes the uniform distribution}, such that $R_0 = 1.83$. The horizontal axis is shown on a logarithmic scale.}
    \label{fig_cs3_optimal}
\end{figure*}

In particular, when considering a cycle graph, where each community is only connected to its two nearest neighbours, the situation changes drastically as observed in Figure~\ref{fig_cs3_cycle_optimal}. Compared to the balanced situation $c=1$ where links are broken equally fast within and between communities, both extremes $c \to 0$ and $c \to \infty$ appear to be beneficial for reducing the spread. Most likely, the spread of the disease can be diminished by one of two methods: (i) isolating communities with many infections or (ii) preventing getting infected by breaking connections between communities. This contrasts our results for the all-to-all topology in Figure~\ref{fig_cs3_complete_optimal}, where the method of quarantining communities (i.e.\ breaking links within communities) does not work very well, because of the huge number of neighbouring communities. Overall, we conclude that a cycle network is more localised and quarantining is therefore more effective than on the complete network.

\subsection{Case study \correz{3}: Local versus global awareness}\label{cs5}

Besides the influence of external and internal functional responses and the periodicity, another key aspect of the aNIMFA model is the possible dependence of the functional responses on the local and global prevalence, i.e.\ the choice for breaking and creating links can be dependent on the local \emph{and} global information on the disease. 

As in case study 2, the link-creation functional responses are taken as
\begin{align}\label{eq_cs4_fcr}
 \begin{split}
    \fcrsame &= 1-y_i^2, \\
    \fcrhet &= 1-y_i y_j,
    \end{split}
\end{align}
and for the link-breaking mechanism, we pick
\begin{align}\label{eq_cs4_fbr}
    \begin{split}
        \fbrsame &= y_i^2, \\
        \fbrhet &= c \ y_i y_j + (1-c) \ \bar{y}^2.
    \end{split}
\end{align}
Here, the constant $c$ balances between the breaking of links based on completely local information ($c=1$) and completely global information ($c=0$).

We emphasise that considering the global prevalence to make decisions whether to break connections is fundamentally different to taking all-to-all couplings. In a complete graph, the prevalence of all nodes influence a node's prevalence, whereas global awareness uses the average of all node prevalences to influence the link density directly, but cannot influence the node's prevalence directly.

For many graphs, graph sizes, functional responses and homogeneous and heterogeneous parameters, the peak prevalence~$y_p$ and the steady-state prevalence~$\yinfty$ are the highest with only global awareness ($c=0$) and decrease monotonically with increasing $c$. This is exemplified in Figure~\ref{fig_cs4f} in a \BA{} graph \cite{BarabasiBook}. The reasoning is as follows. Even though using global information may prevent spreading in some links on the network, it is only an average, necessarily also resulting in less link removals between other communities. Especially the most vulnerable parts of the network, i.e.\ those nodes with few infections and high neighbouring infections, have the most benefit by considering their neighbours directly instead of information on the network as a whole. In total, we can only conclude that local information is superior to suppress an epidemic for links between communities.

In another closely related scenario, we also balance between local and global awareness, but now for the \emph{internal} link densities $z_{ii}$, which changes the link-breaking mechanisms to
\begin{align}\label{eq_cs4_fbr_v2}
    \begin{split}
        \fbrsame &= c \ y_i^2 + (1-c) \ \bar{y}^2, \\
        \fbrhet &= y_i y_j.
    \end{split}
\end{align}
When a community with low prevalence is connected to a community with high prevalence, precautiously removing links within its own community may help to reduce the spread of the disease. Figure~\ref{fig_cs4e} supports our hypothesis by demonstrating that the steady-state prevalence~$\yinfty$ may exhibit non-monotonic behaviour in the parameter $c$. The effect of the parameter $c$ is admittedly small, but we believe the effect can be more substantial in larger networks. To conclude, for link removals within communities, there is no superior strategy -- depending on the network structure, the severity of the disease and the network dynamics, using global information may help to reduce the spreading of the disease. We expect that global information will become more important in larger networks, which we leave as a direction for further research.

\begin{figure}[H]
    \centering
    \subfloat[\label{fig_cs4e} Within communities]{%
    \includegraphics[width=0.49\textwidth]{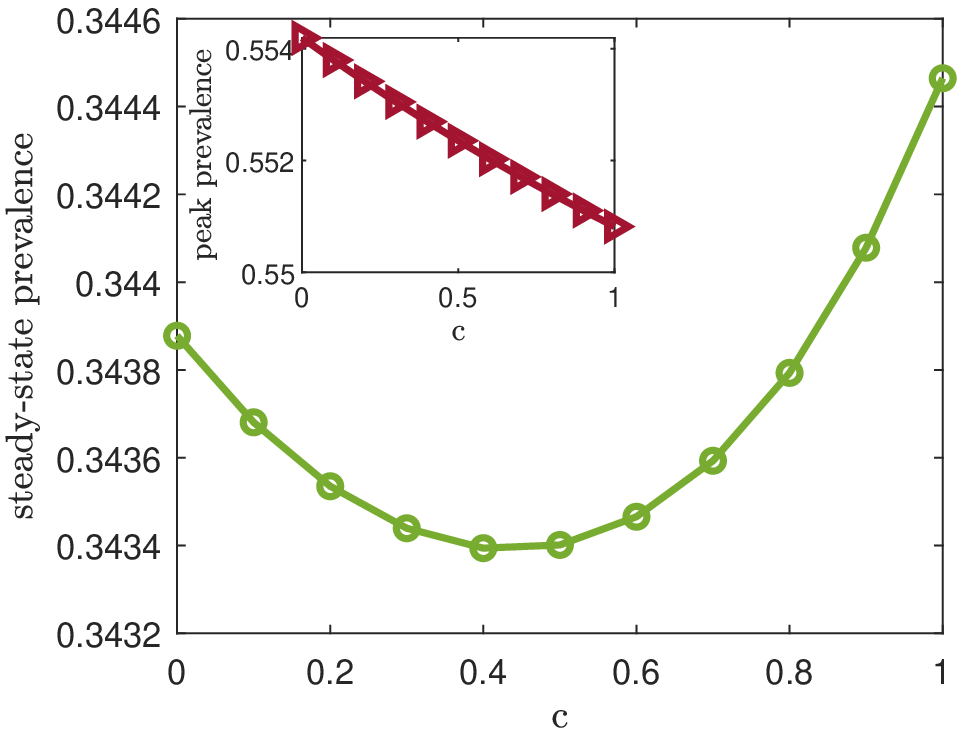}
    }
    \subfloat[\label{fig_cs4f} Between communities]{%
    \includegraphics[width=0.49\textwidth]{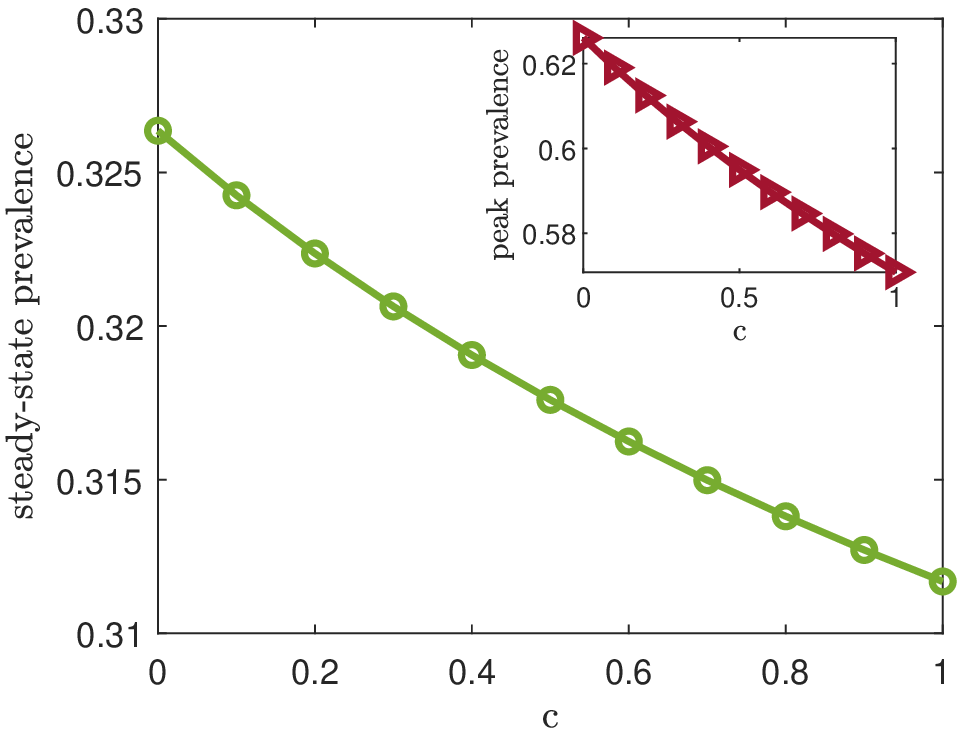}
    }
    \caption{Case study \correz{3}: balancing between local and global awareness of the disease (a) within communities and (b) between communities showing the peak prevalence (red triangles) and the steady-state prevalence (green circles). All curves are declining for increasing $c$ values, but the steady-state prevalence within communities is non-monotonic. Simulations are based on an \BA{} graph with $m_0=3, m=2$, \correz{with the values of parameters sampling: $\delta_i = 1, \zeta_{ij} \sim U([0, 2]), \xi_{ij} \sim U([0, 1])$, $y_i(0)=0$ for all $i \neq 1$ and $y_1(0)=0.2$ and $z_{ij}(0)=0$ for all $i,j$, where $U$ denotes the uniform distribution.} Furthermore, (a) $n=20$ nodes, $\beta_{ij} \sim U([0,1])$ such that $R_0 = 2.93$ and (b) $n=50$ nodes, $\beta_{ij} \sim U([0, 0.8])$ such that $R_0 = 2.60$.} 
    \label{fig_cs4}
\end{figure}

\section{Conclusions}\label{sec:concl}
In this paper, we proposed a generalization of the aNIMFA model investigated in \cite{achterbergsensi2022adaptive} to a network of $n$~communities. We provided a formula for the well-known basic reproduction number~$R_0$. We investigated existence and stability of equilibria of the system. As usual with this type of models, the stability of the Disease Free Equilibrium depends the threshold value $R_0$ being smaller or greater than $1$. We also showed that at least one Endemic Equilibrium exists under a condition slightly stronger than $R_0>1$. 

Lastly, we enhanced our analytical results through an extensive numerical exploration of various case studies. In particular, we showed that the multigroup aNIMFA model allows for periodic orbits in a network with just two communities. We showcased this fact with an asymmetric on/off strategy for the adaptivity between communities, representing total lockdowns when the prevalence reaches a given threshold. 
Then, we showed that breaking connections between communities is preferable over breaking connections within communities in dense graphs, but in sparse networks (with few links) both link-breaking strategies are viable approaches to lessen the disease prevalence. Lastly, we measured the trade-off between local and global awareness on the current state of the epidemic. In many scenarios, using local information appears superior to using global awareness, but further research on larger networks is required to fully support this claim.

Given that many of our conclusions are drawn based on numerical solutions, it would be interesting to derive an analytical foundation of these results, while keeping restrictions on the functional responses $\fbrhet$ and $\fcrhet$ as minimal as possible. Perhaps monotonicity of the functional responses is sufficient to deduce information on the asymptotic behaviour of the system. Or is there any other characteristic one could leverage to foresee relevant characteristics of the system, e.g. the peak of prevalence or the steady-state prevalence?

Moreover, we remark that our approach to the modelling of adaptivity in the context of epidemic spreading can be easily generalized to more complex models. The main underlying hypothesis of the systems presented here and in \cite{achterbergsensi2022adaptive} is the SIS compartmental structure, which does not allow for any kind of immunity. However, modelling the connectivity based on link densities between between nodes and viral states can be included in any multigroup model, as a level of adaptation of the communities to the evolution of an epidemic. In more complex models, such as the SIR (Susceptible -- Infected -- Recovered), SIRS, SAIRS (Susceptible -- Asymptomatic -- symptomatic Infectious -- Recovered -- Susceptible) or SIRWS (Susceptible -- Infectious -- Recovered -- Waning immunity -- Susceptible), incorporating disease awareness and adaptivity in the system is crucial for improving its realism. Our approach to the subject is very simple, and therefore well-suitable for wide applicability in epidemics. \correz{In particular, one could consider a variation of system \eqref{eq_animfa} in which some of the parameters, e.g. $\beta_{ij}$, vary periodic in time, mimicking seasonality.}

Lastly, one may consider the inclusion of time-delayed mechanisms and multiple viruses spreading in the same population. For the former, it could be interesting to investigate an adaptivity based on e.g. yesterday's values on the prevalence, which are more realistically available than real-time ones. For the latter, one could have different adaptivity strategies depending on how each virus is perceived by the population, greatly increasing the complexity of the dynamics.

\correz{\section{Data Availability Statement}\label{sec:codes}
All the codes used to generate Figures \ref{fig_ex0b_p}-\ref{fig_cs4} are avaiable in the GitHub repository \newline\url{https://github.com/SaraSottile/AdaptativeMeanField}.
}

\vspace{1cm}

\noindent \textbf{Acknowledgements.} Mattia Sensi was supported by the Italian Ministry for University and Research (MUR) through the PRIN 2020 project ``Integrated Mathematical Approaches to Socio-Epidemiological Dynamics'' (No. 2020JLWP23, CUP: E15F21005420006).

{\footnotesize
    \bibliographystyle{unsrt}
    \bibliography{references}
}

\end{document}